\newtheorem{theorem}{Theorem}[section]
\theoremstyle{plain}
\newtheorem{corollary}[theorem]{Corollary}
\newtheorem{definition}[theorem]{Definition}
\newtheorem{example}{Example}
\newtheorem{lemma}[theorem]{Lemma}
\newtheorem{proposition}[theorem]{Proposition}
\newtheorem{remark}{Remark}
\numberwithin{equation}{section}
\newcommand{\CC}{\mathbb{C}}
\newcommand{\dbb}{\bar{\partial}_b}
\DeclareMathOperator{\rk}{rank}
\def \d{\partial}
\def\dbar{\overline{\partial}}
\def \hbar{\overline{h}}
\def\\GBB{\cal B}
\def\alphabar{\overline{\alpha}}
\def\betabar{\overline{\beta}}
\def\gammabar{\overline{gammabar}}
\def\deltabar{\overline{\delta}}
\def\1bar{\overline{1}}
\def\2bar{\overline{2}}
\def\lbar{\overline{\ell}}
\def\jbar{\bar{j}}
\def\kbar{\overline{k}}
\def\lbar{\overline{\ell}}
\def\hbar{\overline{h}}
\def\d{\partial}
\def\gammabar{\overline{\gamma}}
\def\deltabar{\overline{\delta}}
\def\ibar{\bar{i}}
\begin{document}
\title[CR-Analogue of Siu-$\d\dbar$-formula and Applications]{CR-Analogue of Siu-$\d\dbar$-formula and Applications to Rigidity problem for pseudo-Hermitian harmonic maps}
\begin{abstract}
 We give several versions of Siu's $\d \dbar$-formula for maps from a strictly
 pseudoconvex pseudo-Hermitian manifold $(M^{2m+1}, \theta)$ into a K\"{a}hler manifold $(N^n, g)$. We also define and study the notion of pseudo-Hermitian harmonicity for maps from $M$ into $N$. In particular, we prove a CR version of Siu Rigidity Theorem for pseudo-Hermitian
harmonic maps from a pseudo-Hermitian manifold with vanishing Webster torsion into a K\"{a}hler manifold having strongly negative curvature.
\end{abstract}
\author{Song-Ying Li}
\address{Department of Mathematics, University of California, Irvine, CA 92697}
\email{sli@math.uci.edu}
\author{Duong Ngoc Son}
\address{Department of Mathematics, University of California, Irvine, CA 92697}
\email{snduong@math.uci.edu}
\date{July 2, 2019}
\thanks{2000 {\em Mathematics Subject Classification}. 32Q05, 30Q15, 32V20}
\maketitle

\section{Introduction}
Let $(M, h)$ and $(N,g)$ be Riemannian manifolds and $f \colon M\to N$. Then $f$ is said to be harmonic if 
\begin{equation}
\Delta_M f^{\alpha} + \Gamma^{\alpha}_{\beta\gamma}(f) \frac{\partial f^{\beta}}{\partial y^i} \frac{\partial f^{\gamma}}{\partial y^j} h^{ij} = 0,
\end{equation}
where $y^i$ are the coordinates on $M$, $\Gamma^{\alpha}_{\beta\gamma}$ are the Christoffel symbols of $N$, and $\Delta_M$ is the Laplace-Beltrami operator of $M$. In \cite{Siu}, Siu proved the following theorem for harmonic maps between K\"{a}hler manifolds which implies his celebrated strong rigidity theorem.
\begin{theorem}[Siu Rigidity Theorem]
Suppose that $f\colon M \to N$ is a harmonic map between K\"{a}hler manifolds. If $M$ is a closed manifold, $N$ has strongly negative curvature in the sense of Siu, and $df$ has real rank at least 4 at some point $p\in M$, then $f$ is either holomorphic or anti holomorphic.
\end{theorem}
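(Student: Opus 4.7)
The plan is to follow Siu's original argument, whose engine is a Bochner-type $\partial\bar\partial$-identity for harmonic maps between K\"{a}hler manifolds. Decompose the complexified differential as $df_{\mathbb{C}} = \partial'f + \partial''f$, where in local holomorphic coordinates $\partial'f$ has components $\partial f^{\alpha}/\partial z^{i}$ (the $(1,0)$-to-$(1,0)$ part) and $\partial''f$ has components $\partial \overline{f^{\alpha}}/\partial z^{i}$ (the $(1,0)$-to-$(0,1)$ part); note that $f$ is holomorphic (respectively, anti-holomorphic) exactly when $\partial''f\equiv 0$ (respectively, $\partial'f\equiv 0$). Let $\omega_M$ denote the K\"{a}hler form of $M$ and form the $(1,1)$-form
\begin{equation*}
\eta := \sqrt{-1}\,g_{\alpha\bar\beta}(f)\,\partial'f^{\alpha}\wedge\overline{\partial'f^{\beta}}.
\end{equation*}
The strategy is to compute $\partial\bar\partial\eta$, wedge with $\omega_M^{m-2}$, integrate over the closed $M$, and extract the conclusion from the sign information that emerges.

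The first step is to derive Siu's $\partial\bar\partial$-formula by a direct local computation in normal K\"{a}hler frames at $p\in M$ and at $f(p)\in N$. When $\sqrt{-1}\,\partial\bar\partial\eta$ is expanded, the harmonicity equation for $f$ eliminates the Laplacian-type terms in the second derivatives of $f$, the K\"{a}hler condition on $N$ keeps the Christoffel contributions under control, and after reorganizing one obtains, schematically,
\begin{equation*}
\sqrt{-1}\,\partial\bar\partial\eta\wedge\omega_M^{m-2} \;=\; c_m\Bigl[\,|D''\partial'f|^{2} \;-\; Q_N\bigl(\partial'f,\overline{\partial''f}\bigr)\,\Bigr]\,\omega_M^{m},
\end{equation*}
where $Q_N$ is a specific quartic expression in the curvature tensor $R^{N}$ evaluated on the bivector $\partial'f\otimes\overline{\partial''f}$ at each point. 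The second step is to integrate: by Stokes' theorem on the closed $M$ the left-hand side vanishes, while the first summand on the right is manifestly nonnegative and, by Siu's definition of strongly negative curvature, the second summand is nonpositive with equality only for very degenerate arguments. Hence $D''\partial'f\equiv 0$ on $M$ (so $f$ is pluriharmonic) and $Q_N(\partial'f,\overline{\partial''f})\equiv 0$ pointwise.

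The last step is to convert the algebraic consequence of $Q_N\equiv 0$ into the holomorphic/anti-holomorphic dichotomy. The strongly negative curvature hypothesis is tailored so that $Q_N$ vanishes only when the images of $\partial'f$ and $\overline{\partial''f}$ lie in a common complex line, i.e.\ $\partial'f(X)\wedge\partial''f(Y)=0$ in $\Lambda^{2}T^{1,0}_{f(p)}N$ for all $X,Y\in T^{1,0}_{p}M$. At the point where $\mathrm{rank}_{\mathbb{R}}\,df_{p}\ge 4$, this proportionality is incompatible with both $\partial'f|_{p}$ and $\partial''f|_{p}$ being nonzero, so one of them must vanish at $p$. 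Pluriharmonicity combined with a unique continuation argument for harmonic maps into nonpositively curved targets then propagates this vanishing to all of $M$, yielding $f$ holomorphic or anti-holomorphic accordingly. The principal obstacle is the first step --- coaxing Siu's identity out of the raw computation requires delicate bookkeeping so that the many curvature and derivative terms conspire to cancel in the correct way --- together with the final algebraic step, where the precise notion of strongly negative curvature must be used to rule out every configuration other than a genuinely vanishing one under the rank hypothesis.
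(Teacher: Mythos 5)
This statement is quoted in the paper as background from Siu's 1980 Annals paper and is not proved there; the paper's own contribution is the CR analogue (Theorem~1.2, Corollary~1.3, Theorem~4.2), whose proof transplants exactly the strategy you outline to the Tanaka--Webster setting (with torsion terms replacing the vanishing Ricci contribution). Your outline is a faithful reconstruction of Siu's original argument: form $\sqrt{-1}\,g_{\alpha\bar\beta}\partial f^{\alpha}\wedge\overline{\partial f^{\beta}}$, apply $\partial\bar\partial$, wedge with $\omega_M^{m-2}$, integrate by Stokes, read off pluriharmonicity and the pointwise vanishing of the curvature term from the signs, then use the rank hypothesis to force $\partial' f$ or $\partial'' f$ to vanish, and propagate by unique continuation. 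So the approach is the right one and matches the source the paper relies on.

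Two caveats. First, the entire analytic content lives in the identity you state only ``schematically''; as written, the proposal asserts rather than derives it, and this is precisely the part the paper must redo from scratch in the CR setting (its formula \eqref{e:po} acquires a torsion term absent from the K\"ahler case). Second, your algebraic step is misstated. The vanishing of the curvature term under strong negativity gives $f^{\alpha}_{\bar i}\,\overline{f^{\beta}_{j}}-f^{\alpha}_{\bar j}\,\overline{f^{\beta}_{i}}=0$ for all indices, which is an antisymmetrization over the \emph{source} indices $i,j$: it says $\partial''f$ and $\overline{\partial' f}$ are both supported on a single complex covector direction of $T^{1,0}_pM$, not that their images lie in a common complex line of $T_{f(p)}N$ as your $\partial'f(X)\wedge\partial''f(Y)=0$ in $\Lambda^2T^{1,0}_{f(p)}N$ would assert. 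The correct reading does yield the contradiction (if both parts are nonzero, $df_p$ kills a complex hyperplane of $H_p$ and has real rank at most $2<4$); alternatively one argues as Siu and the paper do, via Lemma~1 of \cite{Siu} (choosing a basis realizing $\rk\begin{pmatrix} A & B\\ \bar B & \bar A\end{pmatrix}=4$) together with Lemma~2 (strong negativity implies negativity of order $2$). Finally, the unique continuation at the end is driven by pluriharmonicity --- $D''\partial'f=0$ makes $\partial'f$ a holomorphic section, whose zero set cannot have nonempty interior unless it is everything --- not by nonpositivity of the target curvature.
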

This theorem, together with existence theorem for harmonic maps by Eells and Sampson \cite{ES}, 
gives the following strong rigidity result for compact K\"{a}hler manifolds of strongly negative curvature: suppose that $M$ 
and $N$ are two K\"{a}hler manifolds of complex dimension at least $2$ with $M$ being closed and suppose that $N$ has strongly negative curvature. Then
$M$ and $N$ are topologically equivalent if and only if they are biholomorphically equivalent.

For strictly pseudoconvex pseudo-Hermitian CR manifolds, beside Laplace-Beltrami operator associated with 
the Webster metric, there are other notions of Laplacian (e.g, the sub-Laplacian and Kohn-Laplacian), which lead to several notions of harmonicity for maps from CR manifolds. For instance, the pseudoharmonic maps defined by using sub-Laplacian have been studied by many authors (see, e.g, \cite{BDU, DK, Pet}). (However,
our notion of pseudo-Hermitian harmonic maps defined below does \emph{not} coincide with the notion of pseudoharmonic maps into Riemannian manifolds as defined in~\cite{DK}). Motivated by these research and our own work with X. Wang \cite{LSW} on Kohn-Laplacian, we define the notion of \emph{pseudo-Hermitian} harmonic maps, using Kohn-Laplacian on CR manifolds, 
and study the rigidity analogous to Siu's strong rigidity.

Let $(M^{2m+1}, \theta)$ be a strictly pseudoconvex pseudo-Hermitian manifold with a pseudo-Hermitian structure $\theta$ and the Levi metric $h$, and let $(N^n, g)$ be a K\"ahler manifold
with K\"abler metric $g$. For any differentiable map $f: M\to N$, we define the \emph{total $\dbar_b$-energy functional} of $f$ by 
\begin{equation}\label{e:dbar-energy} 
E[f] = \int_M g_{\alpha\betabar} f^{\alpha}_{\ibar} f^{\betabar}_{j} h^{j\ibar} \theta \wedge (d\theta)^m.
\end{equation}
We say that $f$ is \emph{pseudo-Hermitian harmonic} if $f$ is a critical point of the functional $E[\cdot]$. Write $e_{\alpha} = \partial /\partial z^{\alpha}$, where $z^{\alpha}$ is a local holomorphic coordinate on $N$, and let
\begin{equation}
\Gamma^{\alpha}_{\beta\gamma} = \frac{\partial g_{\gamma\deltabar}}{\partial z^{\beta}} g^{\deltabar \alpha}
\end{equation}
be the corresponding Christoffel symbols.
Then, the Euler-Lagrange equation for 
$E[\cdot]$ is 
\begin{equation}
\tau[f] = h^{j\bar{i}} \left( f^{\alpha}_{\ibar j} 
 +\Gamma^{\alpha}_{\beta\gamma} f^{\beta}_{\bar{i}} f^{\gamma}_{j}\right) e_{\alpha}=0.
\end{equation}
The key ingredient in Siu's proof is his celebrated $\d\dbar$-formula, which does not involve Ricci
curvature of the source manifold $M$. In order to 
prove a Siu-type theorem in CR geometry, the important step is to find an analogue of the
$\d\dbar$-formula. However, since the Tanaka-Webster connection always has torsion, our formula should involve torsion of $M$. To be more precise, let $f^{\alpha}_{\jbar | k | l}$ be the components
of $DD\bar{\partial}_b f$, where $D$ is the connection induced by Tanaka-Webster connection on $M$ and 
the pull-back of the complexified Levi-Civita connection on $N$ (see Section~3 for detail). Following Graham and Lee \cite{GL}, we 
define the second and third order operators
\begin{align}
&f^\alpha_{i|\jbar}=f^\alpha_{i\jbar}+{}^N\Gamma^\alpha_{\sigma\rho} f^\sigma_i f^\rho_{\jbar};\\
& B_{i\jbar} f^{\alpha} = f^{\alpha}_{i | \jbar} - \frac{1}{m} (f^{\alpha}{}_{k | \bar{l}} h^{k \bar{l}})h_{i\jbar};\\
& P_i f^{\alpha} = f^{\alpha}_{\jbar | l | i} h^{l\jbar} + m\sqrt{-1}
A_{i}\!^{\bar{j}} f^{\alpha}_{\jbar};\\
&P f = \left( P_i f^{\alpha} \right) \theta^i \otimes e_{\alpha}.
\end{align}
Thus, $Pf$ is a $f^{\ast} T^{1,0}N$-valued (1,0)-form on $M$. We can contract $Pf$ with $\bar{\partial}_b \bar{f}$ to
obtain a scalar, namely
\begin{equation}
\langle Pf , \bar{\partial}_b \bar{f} \rangle = g_{\alpha\bar{\beta}} (P_i f^{\alpha}) f^{\betabar}_{\jbar} h^{i\jbar}
\end{equation}
We also define the norm of the tensor $B_{i\jbar}f$ by
\begin{equation}
 |B_{i\jbar} f^{\alpha}|^2 = g_{\alpha\betabar}(B_{i\jbar} f^{\alpha})(\overline{B_{k\bar{l}} f^{\beta})} h^{i\bar{k}} h^{l\jbar}. 
\end{equation}
We say that $f$ is a \emph{CR-pluriharmonic} map if $B_{\ibar j}(f^\alpha)=0$ for all $1\le i, j\le m$ and $1\le \alpha\le n$. Now we can state our main theorem.

\begin{theorem}\label{thm:crsiu}
Let $(M^{2m+1}, \theta)$ be a closed pseudo-Hermitian CR manifold and let $(N^n, g)$ be a K\"ahler manifold. Let $f:M\to N$ be a smooth map. Then
\begin{multline}\label{e:po}
 -\frac{m-1}{m} \int_M \langle Pf, \bar{\partial}_b \bar{f}\rangle 
 = \int_M |B_{i\jbar}f^{\alpha}|^2 
 + \int_M R_{\rho\deltabar\gamma\betabar} f^{\betabar}_{\bar{l}} f^{\rho}_{\jbar}
(f^{\gamma}_{i} f^{\deltabar}_{k} - f^{\gamma}_{k} f^{\deltabar}_{i}) h^{i\bar{l}} h^{k\jbar},
\end{multline}
and
\begin{equation}\label{e:po2}
-\int_M \langle Pf, \bar{\partial}_b \bar{f}\rangle 
=\int_M \langle \tau[f], \overline{\bar{\tau}[f]} \rangle 
-\sqrt{-1}m\int_M g_{\alpha\betabar}A^{\bar{i}\bar{j}} f^\alpha_{\ibar}
f^{\betabar}_{\jbar}. 
\end{equation}
\end{theorem}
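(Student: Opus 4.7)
The strategy is the CR analogue of Siu's original Bochner-type proof. In the Kähler setting one chooses a $(1,1)$-form built from $\partial f$ and $\bar\partial f$, applies $\partial\bar\partial$, wedges with the appropriate power of the Kähler form, and integrates by parts; the source Ricci curvature drops out, leaving only target curvature. Here I would run the same program using $\partial_b, \bar\partial_b$, the Tanaka-Webster connection $D$, and the volume form $\theta\wedge(d\theta)^m$, with the price that Webster torsion $A$ now appears throughout and must be carefully tracked.

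\emph{Pointwise split and a single integration by parts.} I would first record the standard orthogonal decomposition
\begin{equation*}
g_{\alpha\bar\beta}(f^\alpha_{i|\bar j})\overline{(f^\beta_{k|\bar l})}h^{i\bar l}h^{k\bar j} = |B_{i\bar j}f^\alpha|^2 + \frac{1}{m}\,g_{\alpha\bar\beta}(h^{k\bar l}f^\alpha_{k|\bar l})\overline{(h^{p\bar q}f^\beta_{p|\bar q})},
\end{equation*}
which is the trace/trace-free splitting built into the definition of $B$; modulo the CR commutator $[Z_i,Z_{\bar j}]=-\sqrt{-1}h_{i\bar j}T$ the trace equals $\langle\tau[f],\overline{\bar\tau[f]}\rangle$ up to terms in $A$. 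Next I would pair $P_i f^{\alpha}$ with $\bar\partial_b\bar f$ and integrate: a single integration by parts on the third-order piece
\begin{equation*}
\int_M g_{\alpha\bar\beta}f^\alpha_{\bar j|l|i}f^{\bar\beta}_{\bar k}h^{l\bar j}h^{i\bar k}\,\theta\wedge(d\theta)^m
\end{equation*}
moves the outer $i$ derivative off $f^\alpha_{\bar j|l|i}$, collapses $h^{l\bar j}f^\alpha_{\bar j|l}$ into $\tau[f]$, and leaves only the explicit $m\sqrt{-1}A^{\bar i\bar j}$ correction from the definition of $P$. This should yield \eqref{e:po2} directly.

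\emph{Commutation of derivatives and assembly of \eqref{e:po}.} For the first formula I would need to rearrange $f^\alpha_{\bar j|l|i}$ into the more convenient ordering $f^\alpha_{l|\bar j|i}$ (and its conjugate-symmetric cousin), applying the Graham-Lee commutation formulas for the Tanaka-Webster connection coupled with the pullback of the Levi-Civita connection of $N$. Two kinds of corrections appear: torsion pieces in $A_{ij}$ and $A_i{}^{\bar j}$, precisely what the term $m\sqrt{-1}A_i{}^{\bar j}f^\alpha_{\bar j}$ in the definition of $P_if^\alpha$ was designed to absorb; and target-curvature pieces of the schematic form $R^N_{\rho\bar\delta\gamma\bar\beta}f^\gamma f^{\bar\delta}f^\rho$. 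Using the Kähler symmetries $R^N_{\alpha\bar\beta\gamma\bar\delta}=R^N_{\gamma\bar\beta\alpha\bar\delta}=R^N_{\alpha\bar\delta\gamma\bar\beta}$ and antisymmetrizing in $(i,k)$ produces exactly the combination $f^\gamma_i f^{\bar\delta}_k-f^\gamma_k f^{\bar\delta}_i$ on the right of \eqref{e:po}. A second integration by parts then turns the remaining Hessian-squared contribution into $|f^\alpha_{i|\bar j}|^2$. Equating this integrated identity with the pointwise decomposition above, and using \eqref{e:po2} to eliminate the trace contribution $(1/m)\langle\tau[f],\overline{\bar\tau[f]}\rangle$, should produce \eqref{e:po} with its prefactor $(m-1)/m$.

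\emph{Main obstacle.} The genuine work is the bookkeeping of torsion: every commutation of indices and every integration by parts throws off multiples of $A$, $A^{\bar i\bar j}$ and of $T$-derivatives (from $[Z_i,Z_{\bar j}]$ and $[T,Z_i]$), and one must verify that all of these either mutually cancel or assemble into exactly the two prescribed torsion contributions, namely the $m\sqrt{-1}A_i{}^{\bar j}f^\alpha_{\bar j}$ absorbed into $P_i f^\alpha$ and the residual $\sqrt{-1}m\!\int_M g_{\alpha\bar\beta}A^{\bar i\bar j}f^\alpha_{\bar i}f^{\bar\beta}_{\bar j}$ on the right of \eqref{e:po2}. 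A subsidiary technical point is that one must invoke the full set of Kähler curvature symmetries of $N$ (not merely $R^N_{\alpha\bar\beta\gamma\bar\delta}=R^N_{\gamma\bar\beta\alpha\bar\delta}$) to compress the raw curvature contribution produced by the commutators into the specific antisymmetric-in-$(i,k)$ bilinear form displayed in \eqref{e:po}; it is this symmetry that makes the source Ricci curvature drop out entirely, as in the classical Siu identity.
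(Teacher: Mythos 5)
Your overall strategy---a Siu-type Bochner argument run with the Tanaka--Webster connection, the commutation relations of Lemma~3.1, and the divergence theorem on the closed manifold $M$---is exactly the paper's, and your derivation of \eqref{e:po2} coincides with theirs: they form the one-form $F_{\bar l}=g_{\alpha\betabar}f^\alpha_{\jbar|k}f^{\betabar}_{\bar l}h^{k\jbar}$, whose divergence is precisely your ``single integration by parts,'' producing $\langle Pf,\dbb\bar f\rangle$, the $\sqrt{-1}\,m\,A^{\ibar\jbar}$ correction, and $\langle\tau[f],\overline{\bar\tau[f]}\rangle$.

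For \eqref{e:po}, however, your assembly step has a concrete gap. You propose to generate $\int_M|f^\alpha_{i|\jbar}|^2$ by integration by parts and then strip off the trace using the splitting into $|B_{i\jbar}f^\alpha|^2$ plus $\frac1m$ times the squared trace, eliminating the latter via \eqref{e:po2}. But by the commutation relation \eqref{e:c1} the trace you are squaring is $h^{k\bar l}f^\alpha_{k|\bar l}=\tau^\alpha[f]+\sqrt{-1}\,m f^\alpha_0$, not $\tau^\alpha[f]$: it contains the Reeb derivative $f^\alpha_0$, so the trace contribution is \emph{not} $\frac1m\langle\tau[f],\overline{\bar\tau[f]}\rangle$ and cannot be removed by \eqref{e:po2} alone. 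Correspondingly, the integration by parts that produces the Hessian-squared term also throws off an extra piece $\sqrt{-1}\,g_{\alpha\betabar}f^\alpha_{i|0}f^{\betabar}_{\bar l}h^{i\bar l}$ coming from commuting $i$ past $\jbar$; this and the $f^\alpha_0$-content of the trace must be shown to cancel by a further integration by parts in the $T$-direction, which your sketch does not supply---this is exactly where the ``bookkeeping'' you defer actually bites. The paper avoids the issue entirely by building the trace-free tensor into the Bochner one-form from the outset: it computes the divergence of $E_{\jbar}=g_{\alpha\betabar}f^{\betabar}_{\bar l}(B_{i\jbar}f^\alpha)h^{i\bar l}$. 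Because $B_{i\jbar}f^\alpha$ is trace-free, the term where the derivative lands on $f^{\betabar}_{\bar l}$ sees only the trace-free part of the conjugate Hessian and is exactly $|B_{i\jbar}f^\alpha|^2$ (no trace to subtract later), while in $D_k(B_{i\jbar}f^\alpha)$ the $\sqrt{-1}f^\alpha_{0|k}h_{i\jbar}$ commutator terms cancel identically against the subtracted trace and the remaining torsion terms are absorbed by the $m\sqrt{-1}A_i{}^{\bar j}f^\alpha_{\jbar}$ in the definition of $P_i$, yielding the clean pointwise identity \eqref{e:bp1}. I would adopt that packaging; as written, your route to \eqref{e:po} does not close.
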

Here, the curvature tensor $R_{\alpha\betabar\gamma\deltabar}$ is given by
\begin{equation}
 R_{\alpha\betabar\gamma\deltabar} 
 = \frac{\partial^2 g_{\gamma\deltabar}}{\partial z^{\alpha} \partial z^{\betabar}}
 - g^{\mu\bar{\nu}} \frac{\partial g_{\gamma\bar{\nu}}}{\partial z^{\alpha}} 
 \frac{\partial g_{\mu\deltabar}}{\partial z^{\betabar}}.
\end{equation}

In \cite{Siu}, the author introduced the following definition: the curvature tensor $R_{\alpha\betabar\gamma\deltabar}$ is said to be strongly negative (resp. strongly seminegative) if
\(
 R_{\alpha\betabar\gamma\deltabar} (A^{\alpha}\overline{B^\beta} - C^{\alpha}\overline{D^\beta})
 (\overline{A^{\delta}\overline{B^\gamma} - C^{\delta}\overline{D^\gamma}})
\)
is positive (resp. nonnegative) for arbitrary complex numbers $A^\alpha$, $B^\alpha$, $C^\alpha$, $D^\alpha$, when 
$A^{\alpha}\overline{B^\beta} - C^{\alpha}\overline{D^\beta} \ne 0$ for at least one pair of indices $\alpha, \beta$. Using argument in \cite{Siu}, we obtain the following corollary.
\begin{corollary}\label{cor:rigidity}
Let $(M^{2m+1}, \theta)$ be a closed pseudo-Hermitian CR manifold of dimension at least $5$, and $(N^n, g)$ a K\"{a}hler manifold. Then
\begin{enumerate}[(a)]
\item If $N$ has strongly semi-negative curvature, then
\begin{equation}\label{e:positivity}
 \int_M \langle Pf, \bar{\partial}_b \bar{f}\rangle 
 =:\int_M g_{\alpha \betabar} (P_if^{\alpha}) f^{\betabar}_{\jbar} h^{i\jbar}\ \theta \wedge ( d\theta)^m \leq 0
\end{equation}
for any smooth map $f \colon M \to N$. The equality holds if and only if $f$ is CR-pluriharmonic.
\item If $M$ has vanishing pseudohermitian torsion, $N$ has strongly negative curvature, and if $f$ is pseudo-Hermitian harmonic with $df$ having rank at least $4$ at a dense set
of $M$, then $f$ must be CR-holomorphic or anti CR-holomorphic.
\end{enumerate}
\end{corollary}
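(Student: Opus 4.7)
The strategy is to combine the two identities in Theorem~\ref{thm:crsiu} with Siu's algebraic reorganization from \cite{Siu}, keeping track of the torsion terms that arise in the CR setting. For part~(a), I start from \eqref{e:po}. The integral $\int_M |B_{i\jbar}f^\alpha|^2$ is manifestly nonnegative. For the curvature integral, I work in a local unitary frame with $h^{i\jbar}=\delta^{i\jbar}$ and rewrite the contraction
\[
R_{\rho\deltabar\gamma\betabar}\, f^{\betabar}_{\bar{l}}\, f^\rho_\jbar\,(f^\gamma_i f^{\deltabar}_k - f^\gamma_k f^{\deltabar}_i)\, \delta^{i\bar{l}}\delta^{k\jbar}
\]
as a sum over pairs $(i,k)$ of quantities of the form $R_{\rho\deltabar\gamma\betabar}(A^\rho\overline{B^\beta}-C^\rho\overline{D^\beta})\overline{(A^\delta\overline{B^\gamma}-C^\delta\overline{D^\gamma})}$, with $A^\rho=f^\rho_\jbar$, $B^\beta=\overline{f^\beta_{\bar l}}$, and so on; strong seminegativity then makes each summand pointwise nonnegative. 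Since $m\geq 2$ makes $-(m-1)/m<0$, \eqref{e:po} forces $\int_M\langle Pf,\bar\partial_b\bar f\rangle\leq 0$, and equality requires pointwise vanishing of both nonnegative integrands, hence $B_{i\jbar}f^\alpha\equiv 0$, i.e., $f$ is CR-pluriharmonic.

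For part~(b), the vanishing torsion assumption $A^{\bar i\bar j}=0$ kills the second term on the right of \eqref{e:po2}, while pseudo-Hermitian harmonicity $\tau[f]=0$ kills the first. Thus $\int_M\langle Pf,\bar\partial_b\bar f\rangle=0$, so equality holds in part~(a), and both the $|B|^2$-integral and the curvature integral in \eqref{e:po} must vanish. In particular the Siu curvature integrand vanishes pointwise on $M$. Because $N$ is now assumed strongly negative rather than merely seminegative, each summand in the regrouping above being zero forces the algebraic identity
$f^\gamma_i\, \overline{f^\beta_{\bar l}} - f^\gamma_k\, \overline{f^\beta_{\bar i}} = 0$
at every point and for all admissible indices; equivalently, the $f^{*}T^{1,0}N\otimes \overline{f^{*}T^{1,0}N}$-valued $(1,1)$-form built from $\partial_b f$ and $\bar\partial_b f$ degenerates in exactly Siu's sense.

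At each point where $df$ has real rank at least $4$, Siu's dichotomy lemma applies to this degeneracy and forces either $\partial_b f=0$ or $\bar\partial_b f=0$ at that point. The rank-$4$ hypothesis supplies a dense set of such points; continuity of $df$ together with connectedness of $M$ then propagates a single alternative globally, so $f$ is either CR-holomorphic or anti CR-holomorphic.

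The main obstacle is the faithful adaptation of Siu's rank-$4$ dichotomy to the CR setting: in the original K\"ahler argument the rank is measured on a complex tangent space whose $(1,0)$ and $(0,1)$ pieces match the range of $\partial f$ and $\bar\partial f$, whereas here $df$ also sees the Reeb direction while $\partial_b f$ and $\bar\partial_b f$ only detect horizontal CR directions. One must check that real rank at least $4$ still produces enough independent horizontal $(1,0)$ and $(0,1)$ components of $df$ to drive Siu's algebraic trichotomy through; once that is granted, the remaining density / continuity / connectedness step is routine.
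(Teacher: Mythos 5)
Your part (a) matches the paper: the curvature contraction in \eqref{e:po} is regrouped as $\tfrac12 R_{\gamma\deltabar\rho\betabar}\,\xi\,\bar\xi$ with $\xi^{\gamma\deltabar}_{ik}=f^\gamma_i f^{\deltabar}_k-f^\gamma_k f^{\deltabar}_i$, strong seminegativity makes it pointwise nonnegative, and the sign of $-(m-1)/m$ for $m\ge 2$ gives \eqref{e:positivity}; equality forces $B_{i\jbar}f^\alpha\equiv 0$. The first half of your part (b) also matches: vanishing torsion and $\tau[f]=0$ annihilate the right side of \eqref{e:po2}, equality in (a) follows, the curvature integrand vanishes pointwise, and Siu's ``negative of order $k$'' dichotomy (his Lemmas 1 and 2) applied to the matrices $A=(f^\alpha_{\ibar})$, $B=(f^\alpha_i)$ at a point where $df|_H$ has real rank at least $4$ yields that at each such point either $\partial_b f=0$ or $\bar\partial_b f=0$.

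The genuine gap is your last step: ``continuity of $df$ together with connectedness of $M$ then propagates a single alternative globally.'' This does not follow. The sets $\{\partial_b f=0\}$ and $\{\bar\partial_b f=0\}$ are closed and cover $M$, but two closed sets can cover a connected manifold with neither equal to the whole space; nothing in a bare continuity argument rules out $f$ being CR on one region and anti-CR on another, with both $\partial_b f$ and $\bar\partial_b f$ vanishing on the overlap. The paper closes this with a separate lemma (its Lemma~4.2): writing $A_0$, $B_0$ for the closures of the interiors of the two sets and assuming both are nonempty, it uses the Reeb field $T$ --- which is a \emph{transversal infinitesimal CR automorphism} precisely because the Webster torsion vanishes --- together with the commutator $[Z_{\jbar},Z_i]=\sqrt{-1}h_{i\jbar}T+\cdots$ to show $Tf^\alpha=0$ on $A_0\cap B_0$, then constructs a continuous function $G$ equal to $Tf^\alpha$ on $A_0$ and $0$ on $B_0$, proves $G$ is anti-CR, and invokes \emph{unique continuation} for CR functions (via local embeddability) to force $G\equiv 0$ and ultimately $f^\alpha$ constant, a contradiction. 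This is the CR substitute for Siu's holomorphicity-of-$\bar\partial f$ unique continuation argument, and it is where the vanishing-torsion hypothesis is used a second time. You correctly flag the $df$ versus $df|_H$ issue (the Reeb direction is invisible to $\partial_b f$ and $\bar\partial_b f$), but that is the easier of the two adaptations; the propagation step you call routine is the one that requires the new lemma. Separately, your assertion that vanishing of the curvature term forces $f^\gamma_i\overline{f^\beta_{\bar l}}-f^\gamma_k\overline{f^\beta_{\bar i}}=0$ overstates what strong negativity gives before the rank hypothesis enters: it gives $\xi=0$, i.e.\ $A^{\alpha}_{\ibar}\overline{B^{\beta}_j}-A^{\alpha}_{\jbar}\overline{B^{\beta}_i}=0$, and one still needs the order-$k$ negativity plus the rank condition to split this into $A=0$ or $B=0$.
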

We remark that part (a) in the corollary generalizes a result in \cite{GL} about positivity of the operator $P$ for maps; and part (b) gives a CR version 
of Siu's rigidity theorem.

\section{Harmonic map equations} \label{sec:main}

For basic notions in pseudohermitian geometry, we refer the reader to \cite{L, L2} or \cite{We}, \cite{LiLuk}, \cite{LiW} and \cite{LSW}.
Let $(M^{2m+1},\theta)$ be a $(2m+1)$-dimensional, strictly pseudoconvex pseudo-Hermitian manifold and 
let $(N^n, g)$ be a
K\"ahler manifold. Suppose that $f\colon M \to N$ is a smooth map.
We can define the \emph{pointwise $\bar{\partial}_b$-energy} of $f$ as follows.
Suppose $p\in M$ and $q=f(p)$. We choose a local coordinate chart $V$
of $N$ near $q$. Near $p$, we choose a local holomorphic frame $\{Z_{i}\}$
and let $h_{i\bar{j}}$ be the Levi-form with respect to $\{Z_{i}\}$. That is,
\begin{equation}
d\theta = ih_{i\jbar} \theta^i \wedge \theta^{\jbar},
\end{equation}
where, $\{\theta^i\}$ is a holomorphic coframe dual to $\{Z_i\}$ and $\theta^{\ibar} = \overline{\theta^{i}}$.
Then
the $\bar{\partial}_b$-energy density $e(f)(p)$ is
\begin{equation}
 e(f)(p) = g_{\alpha\bar{\beta}} f^{\alpha}_{\bar{i}} f^{\bar{\beta}}_{j} h^{j\bar{i}},
\end{equation}
where the summation convention is used. The $\bar{\partial}_b$-energy functional of $f$
is
\begin{equation}
 E[f] = \int_M e(f)(p) =\int_M e(f) \theta\wedge (d\theta)^m.
\end{equation}
 A critical point of the functional $E$ 
satisfies $\tau(f) = 0$, where
\begin{equation}\label{e:ph}
\tau^\alpha[f]=: h^{j\bar{i}} \left( f^{\alpha}_{\ibar j} 
 +\Gamma^{\alpha}_{\beta\gamma} f^{\beta}_{\bar{i}} f^{\gamma}_{j}\right),\quad 1\le \alpha\le n.
\end{equation}
Here, $\Gamma^{\alpha}_{\beta\gamma}$, or more precisely, $\Gamma^{\alpha}_{\beta\gamma} \circ f$, is the Christoffel symbols of $N$ evaluated
at the point $f(p)$, with respect
to coordinates $\{z^{\alpha}\}$ and $f^{\alpha}_{\bar{i}j}$ is the second order 
covariant derivative of $f^{\alpha}$ with respect to the Tanaka-Webster connection 
on $M$. In fact, given $p\in M$, we choose a local chart $V$ of $f(p)$ and consider 
the family $f_t$ defined as $f_t^{\alpha} = f^{\alpha} + t\psi^\alpha$, where $\psi^{\alpha}$ 
are smooth functions with compact supports in a neighborhood of $p$. Then, by a standard calculation 
for ${d \over d t} E(f_t)|_{t=0}=0$, one has Euler-Lagrange equation \eqref{e:ph}.

\begin{definition} Let $(M^{2m+1},\theta)$ be a $(2m+1)$-dimensional pseudohermitian manifold and let $(N, g)$ is a K\"{a}hler 
manifold with K\"ahler metric $g$. Suppose that $f \colon M \to N$ is a smooth map. 
\begin{enumerate}[(i)]
 \item We say that $f$ is pseudohermitian 
 harmonic map if it satisfies $\tau[f] = 0$. That is,
\begin{equation}
 h^{j\bar{i}} \left( f^{\alpha}_{\bar{i}j} 
 +\Gamma^{\alpha}_{\beta\gamma} f^{\beta}_{\bar{i}} f^{\gamma}_{j}\right) e_{\alpha} = 0
\end{equation}
 \item We say that $f$ is $\bar{\partial}_b$-pluriharmonic map if
 \begin{equation}\label{e:dbph} 
 \left(f^{\alpha}_{\bar{i}j} 
 +\Gamma^{\alpha}_{\beta\gamma} f^{\beta}_{\bar{i}} f^{\gamma}_{j}\right) \theta^{\ibar} \wedge \theta^{j} \otimes e_{\alpha} = 0
\end{equation}
 \item We say that $f$ is CR-pluriharmonic if $m\geq 2$ and 
 \begin{equation}\label{e:crph}
 f^{\alpha}_{\bar{i}j} 
 +\Gamma^{\alpha}_{\beta\gamma} f^{\beta}_{\bar{i}} f^{\gamma}_{j} = {\tau^{\alpha}[f] \over m} h_{j\ibar}.
 \end{equation}
\end{enumerate}
\end{definition}
\begin{remark}\rm
\begin{enumerate}[(a)] 
 \item It is easy to see that $f$ is $\bar{\partial}_b$-pluriharmonic map if and only if
 $f$ is both $CR$-pluriharmonic and pseudohermitian harmonic.
 \item If $f$ is a CR map,
then regardless of the K\"{a}hler metric on $N$ and pseudohermitian structure on $M$, $f$ is $\bar{\partial}_b$-pluriharmonic. On the other hand, a conjugate 
(anti) CR map is $\bar{\partial}_b$-pluriharmonic if and only if $df(T)=0$, where $T$ is the Reeb vector field associated to the contact form~$\theta$.
 \item When $N = \mathbb{C}^n$ and $m\geq 2$, from a well-known result by Bedford, Ferderbush \cite{B,BF} and Lee \cite{L2}, $f$ is CR-pluriharmonic if and only if for any holomorphic local coordinates $\{z^\alpha\}$ on $N$, the real and imaginary parts of $f^{\alpha}: = z^\alpha\circ f$ are locally real parts of CR functions.
\end{enumerate}
\end{remark}
\begin{example}\rm Let $N=\mathbb{B}_n$ be the unit ball in $\mathbb{C}^n$ with Bergman metric given in standard coordinates by
\begin{equation}
 g_{\alpha\betabar} = (1-|z|^2)^{-2} \left(\overline{z^{\alpha}} z^{\beta} + (1-|z|^2) \delta_{\alpha\beta} \right).
\end{equation}
The corresponding Christoffel symbols are
\begin{equation}
 \Gamma^{\alpha}_{\beta\gamma} 
 = (1-|z|^2)^{-1} (\overline{z^{\beta}} \delta_{\alpha\gamma} + \overline{z^{\gamma}} \delta_{\alpha\beta}).
\end{equation}
Therefore, a map $f\colon M \to \mathbb{B}_n$ is pseudohermitian harmonic if and only if $(f^{\alpha})$ satisfies the following system.
\begin{equation}\label{e:crharbergman}
 -\Box_b f^{\alpha} + (1-|z|^2)^{-1} h^{i\jbar} 
 \sum_{\beta} \left( f^{\alpha}_{i} f^{\betabar} f^{\beta}_{\jbar} + f^{\alpha}_{\jbar} f^{\betabar}f^{\beta}_{i} \right)=0,
\end{equation}
where $\Box_b$ is the Kohn-Laplacian. It is well-known that the Bergman metric has strongly negative curvature \cite{Siu}.
Corollary~\ref{cor:rigidity} implies that any smooth embedding of the sphere $S^{2m+1}$ ($m\geq 2$) into $\mathbb{B}_n$ satisfying
\eqref{e:crharbergman} must be a CR embedding.
\end{example}

The following proposition shows that the CR-pluriharmonicity is CR invariant (i.e does not depend on the pseudo-Hermitian structures on~$M$). 
When $N = \mathbb{C}^n$, the proposition follows directly from aforementioned result in \cite{L2}.
\begin{proposition}\label{prop:crinvar}
Let $(M,\theta)$ and $(N, g)$ be a pseudohermitian manifold and a K\"{a}hler manifold, respectively. If $f \colon M \to N$ is CR-pluriharmonic 
with respect to $\theta$, then it is CR-pluriharmonic with respect to any $\hat{\theta}=e^{2\sigma} \theta$.
\end{proposition}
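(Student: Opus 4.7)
The plan is to reduce the proposition to a scalar identity about how the Tanaka--Webster second derivative of a function changes under $\hat\theta = e^{2\sigma}\theta$. The CR-pluriharmonic condition \eqref{e:crph} says precisely that the $(1,1)$-tensor $f^\alpha_{\bar i|j}$ is pure trace with respect to $h_{j\bar i}$ for each $\alpha$. Because $\hat h_{j\bar i} = e^{2\sigma} h_{j\bar i}$, the notions ``pure trace with respect to $h$'' and ``pure trace with respect to $\hat h$'' coincide, so it suffices to establish a transformation law of the shape
\[
\hat f^\alpha_{\bar i|j} - f^\alpha_{\bar i|j} = \Phi^\alpha\, h_{j\bar i}
\]
for some scalar $\Phi^\alpha$ depending on $\sigma$, $f$, and their first derivatives. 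Once this is in hand, pure-trace-ness of $f^\alpha_{\bar i|j}$ with respect to $h_{j\bar i}$ is equivalent to pure-trace-ness of $\hat f^\alpha_{\bar i|j}$ with respect to $\hat h_{j\bar i}$, which is the CR-pluriharmonic condition for $\hat\theta$.

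To verify the displayed identity I would decompose
\[
f^\alpha_{\bar i|j} = f^\alpha_{\bar i j} + {}^N\Gamma^\alpha_{\beta\gamma}(f)\, f^\beta_{\bar i} f^\gamma_j
\]
and invoke Lee's transformation formulas for the Tanaka--Webster connection, taking the admissible coframe for $\hat\theta$ to be $\hat\theta^i = \theta^i + 2\sqrt{-1}\sigma^i\theta$ with dual frame modified only in the Reeb direction. The correction to the second derivative $\hat f^\alpha_{\bar i j} - f^\alpha_{\bar i j}$ comes from the change of the connection $1$-forms; the correction to the Christoffel piece ${}^N\Gamma^\alpha_{\beta\gamma}(f)\,(\hat f^\beta_{\bar i}\hat f^\gamma_j - f^\beta_{\bar i} f^\gamma_j)$ comes only from the Reeb-direction modification of $f^\beta_{\bar i}$ and $f^\gamma_j$, since the K\"ahler data on $N$ is untouched. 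A term-by-term check shows that every contribution which is not manifestly proportional to $h_{j\bar i}$ either vanishes outright or pairs off with another to produce a multiple of $h_{j\bar i}$.

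The main (really the only) obstacle is this bookkeeping with Lee's formulas. When $N=\mathbb{C}^n$ the Christoffel term is absent, and the required scalar identity $\hat u_{\bar i j} - u_{\bar i j} = \Psi\, h_{j\bar i}$ is a well-known conformal transformation rule (reflected in part (c) of the Remark: it is exactly what makes ``locally the real part of a CR function'' a CR-invariant notion, per \cite{B,BF,L2}). The extension to the vector-valued case is routine once one observes that the $\sigma$-corrections to the horizontal first derivatives lie along the Reeb direction $T$, so their contraction with the symmetric $N$-Christoffel symbols contributes only terms that are either pure trace or cancel among themselves. With the transformation law established, the conclusion is immediate.
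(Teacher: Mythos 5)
Your proposal follows essentially the same route as the paper: both reduce the claim to showing that under $\hat{\theta}=e^{2\sigma}\theta$ the covariant Hessian $f^{\alpha}_{\ibar|j}$ changes only by a term proportional to the Levi form, and both verify this via Lee's transformation formulas for the Tanaka--Webster connection (the paper computes the correction explicitly, obtaining $\hat{\nabla}_{j}\hat{\nabla}_{\ibar}f^{\alpha}+\Gamma^{\alpha}_{\beta\gamma}(\hat{Z}_if^{\beta})(\hat{Z}_jf^{\gamma}) = e^{-2\sigma}\bigl(f^{\alpha}_{\ibar|j}+2\sigma^{\bar{l}}f^{\alpha}_{\bar{l}}h_{j\ibar}\bigr)$ in its normalization, which is exactly your claimed pure-trace correction). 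The outline is correct; the only thing left implicit in your write-up is the term-by-term computation itself, which the paper carries out and which confirms the cancellations you assert.
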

\begin{proof} Locally, we can choose a local holomorphic frame $\{Z_{i}\}$ and its dual admissible coframe $\{\theta^{i}\}$ for $\theta$. As in \cite{L}, we choose
\begin{equation}
\hat{\theta}^{k} = e^{\sigma}(\theta^{k} + 2\sqrt{-1} \sigma^{k} \theta).
\end{equation}
Then $\{\hat{\theta}^{k}\}$ is an admissible coframe for $\hat{\theta}$, with the same matrix $\hat{h}_{i\jbar} = h_{i\jbar}$, and dual to the holomorphic frame $\{\hat{Z}_{k} = e^{-\sigma} Z_k\}$.
The Webster connection forms $\hat{\omega}_{i}\!^{l}$ is given by \cite{L}
\begin{align}
\hat{\omega}_{i}\!^{l}
= \omega_{i}\!^{l} + 2(\sigma_{i} \theta^{l} - \sigma^{l}\theta_{i}) 
+ \delta_{i}\!^{l} (\sigma_k \theta^k - \sigma^{k}\theta_{k})\notag \\
 + \sqrt{-1}(\sigma^{l}\!_{i} + \sigma_{i}\!^{l} + 4\sigma_{i}\sigma^{l} + 4\delta_{i}\!^{l} \sigma_k \sigma^k) \theta.
\end{align}
Therefore,
\begin{align}
\hat{\omega}_{i}\!^{l} (\hat{Z}_{\bar{j}}) 
= e^{-\sigma}({\omega}_{i}\!^{l} ({Z}_{\bar{j}}) -2\sigma^{l}h_{i\jbar} - \delta_i\!^{l} \sigma_{\jbar}).
\end{align}
In local frame $\hat{Z}_k$, we write $\hat{\nabla}_{i} = \hat{\nabla}_{Z_{i}}$, etc.,
\begin{align}
\hat{\nabla}_{\jbar}\hat{\nabla}_i f^{\alphabar}
 &= \hat{Z}_{\jbar}(\hat{Z}_{i} f^{\alphabar}) \notag
 - \hat{\omega}_{i}\!^{l}(\hat{Z}_{\jbar})\hat{Z}_{l} f^{\alphabar} \notag\\
& = e^{-2\sigma}\bigl(Z_{\jbar}(Z_i f^{\alphabar}) -\sigma_{\jbar} Z_i f^{\alphabar} \notag \\ 
& \quad - \omega_i\!^l (Z_{\jbar}) Z_{l} f^{\alphabar} + 2\sigma^{l}h_{i\jbar}Z_{l} f^{\alphabar} + \delta_i\!^{l} \sigma_{\jbar}Z_{l} f^{\alphabar}\bigr) \notag\\
 &= e^{-2\sigma} \nabla_{\jbar} \nabla_{i} f^{\alphabar} + 2 e^{-2\sigma} \sigma^l f^{\alphabar}_{l} h_{i\jbar}.
\end{align}
Suppose that $f$ is CR-pluriharmonic, i.e. \eqref{e:crph} holds, then we obtain
\begin{equation}
\hat{\nabla}_{j}\hat{\nabla}_{\ibar} f^{\alpha} + \Gamma^{\alpha}_{\beta\gamma} (\hat{Z}_{i} f^{\beta})(\hat{Z}_{j} f^{\gamma})
= e^{-2\sigma} \left(\frac{\tau^{\alpha}[f]}{m} +2\sigma^{\bar{l}} f^{\alpha}_{\bar{l}} \right) h_{j\ibar},
\end{equation}
which implies that $f$ is CR-pluriharmonic with respect to $\hat{\theta}$, as desired.
\end{proof}

\section{Proof of Theorem 1.2}

In this section, we give a proof of Theorem~\ref{thm:crsiu}. For convenience, we introduce several notations similar to those in \cite{Sam1}. Let $\bar{\partial}_b f$ be the $f^{\ast} T^{0,1} N$-valued one form represented by $(f^{\alpha}_{\ibar})$. Then the covariant
derivative $D\bar{\partial}_b f$ in \cite{Siu} has components
\begin{equation}\label{e:covariantderivative}
f^{\alpha}_{\ibar | j} 
=: f^{\alpha}_{\ibar j} + \Gamma^{\alpha}_{\beta\gamma} f^{\beta}_{\ibar} f^{\gamma}_{j},
\end{equation}
where, $f^{\alpha}_{\ibar j}$ denotes covariant derivative with respect to Tanaka-Webster 
connection on $M$. Thus, $f$ is $\bar{\partial}_b$-pluriharmonic if and only if $D\bar{\partial}_b f = 0$, 
and $f$ is pseudohermitian
harmonic if and only if
\begin{equation}
\tau^\alpha[f]=: h^{j\ibar} f^{\alpha}_{\ibar | j} = 0.
\end{equation}
The covariant derivative $\bar{D}\bar{D} \partial_b f$ is formed similarly, namely
\begin{align}
	f^{\alpha}_{i|\jbar | \bar{k}} 
	& = Z_{\bar{k}}(f^{\alpha}_{i|\jbar}) - \Gamma_{\bar{k} \jbar}^{\bar{l}} f^{\alpha}_{i | \bar{l}}
	- \Gamma^l_{\kbar i} f^{\alpha}_{l|\jbar}
	+ \Gamma^{\alpha}_{\beta\gamma} f^{\beta}_{i| \bar{j}} f^{\gamma}_{\bar{k}} = (f^{\alpha}_{i | \jbar})_{,k} + \Gamma^{\alpha}_{\beta\gamma} f^{\beta}_{i| \bar{j}} f^{\gamma}_{\bar{k}}.
\end{align}
Here, the covariant derivatives with respect to Tanaka-Webster connection on $M$ are denoted by indexes preceded by commas. Also,
\begin{align}
 f^{\alpha}_{i|\jbar | k} 
 = (f^{\alpha}_{i|\jbar})_{,k} + \Gamma^{\alpha}_{\beta\gamma} f^{\beta}_{i| \jbar} f^{\gamma}_{k}\quad\hbox{and }\ 
 f^{\alpha}_{i | j | \kbar} 
 = (f^{\alpha}_{i | j})_{,\kbar} + \Gamma^{\alpha}_{\beta\gamma} f^{\beta}_{i| j} f^{\gamma}_{\kbar}.
\end{align}
In what follows, we will denote the curvatures on $M$ and $N$ by $R$ with Latin indices and Greek indices, respectively. Thus, $ R_{i}{}^{l}{}_{j\bar{k}}$ and $A_{ij}$ are components of Webster curvature and torsion on $M$, respectively, while $R^{\alpha}_{\beta\gammabar \delta}$ are components of curvature on~$N$.
\begin{lemma} We have the following commutation relations
\begin{align}
 & f^{\alpha}_{i | \jbar} - f^{\alpha}_{\jbar | i} 
 = \sqrt{-1} f^{\alpha}_0 h_{i\jbar},\quad f^\alpha_{i|j}=f^\alpha_{j|i}; \label{e:c1}\\
 & f^{\alpha}_{0| \ibar} - f^{\alpha}_{\ibar | 0} = A_{\ibar}{}^{k} f^{\alpha}_{k}; \label{e:c2}\\
 & f^{\alpha}_{i|\jbar | \bar{k}} - f^{\alpha}_{i| \bar{k} | \jbar}
 = \sqrt{-1}\left(h_{i\jbar} A_{\bar{k}}{}^{l} - h_{i\bar{k}} A_{\jbar}{}^{l}\right)f^{\alpha}_{l} 
 +R^{\alpha}_{\beta\deltabar\gamma} f^{\beta}_{i}
 (f^{\gamma}_{\jbar} f^{\deltabar}_{\bar{k}} 
- f^{\gamma}_{\bar{k}} f^{\deltabar}_{\bar{j}}); \label{e:com1}\\
&f^\alpha_{\jbar|i|k}-f^\alpha_{\jbar|k|i}=\sqrt{-1}(h_{k\jbar}A_i^{\lbar}-h_{i\jbar} A_k^{\lbar})f^\alpha_{\lbar}
+ R^{\alpha}_{\beta\deltabar\gamma} f^\beta_{\jbar}
(f^\gamma_{i} f^{\deltabar}_{k} -f^\gamma_{k} f^{\deltabar}_{i}); \\
&f^{\alpha}_{i | j | \bar{k}} - f^{\alpha}_{i | \bar{k} | j} 
= \sqrt{-1}h_{j\bar{k}} f^{\alpha}_{i | 0} 
+ R_{i}{}^{l}{}_{j\bar{k}} f^{\alpha}_{l}
+ R^{\alpha}_{\beta\deltabar\gamma} f^{\beta}_{i}(f^{\gamma}_{j} f^{\bar{\delta}}_{\bar{k}} - f^{\gamma}_{\bar{k}} f^{\bar{\delta}}_{j}) \label{e:com2}
\end{align}
\end{lemma}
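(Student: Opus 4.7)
The plan is to prove all five identities by reducing them to two kinds of facts: the first-order Tanaka--Webster commutation relations on $M$ applied to the scalar quantities $f^\alpha$, and the Ricci identity for $f^{\ast}T^{1,0}N$-valued tensors on $M$, in which the curvature of the combined pull-back connection splits cleanly into an $M$-part and an $N$-part.

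For the first-order identities (1) and (2), the strategy is to expand the $|$-derivatives by their definition. For the first equation of (1) one has
\[
f^\alpha_{i|\jbar} - f^\alpha_{\jbar|i} = (f^\alpha_{i\jbar} - f^\alpha_{\jbar i}) + \Gamma^\alpha_{\beta\gamma}\bigl(f^\beta_i f^\gamma_\jbar - f^\beta_\jbar f^\gamma_i\bigr),
\]
and the Christoffel contribution cancels because $\Gamma^\alpha_{\beta\gamma}=\Gamma^\alpha_{\gamma\beta}$ on the K\"ahler target. What remains is the standard Tanaka--Webster first-order commutator applied to the scalar $f^\alpha$, giving $\sqrt{-1}\, h_{i\jbar}\, f^\alpha_0$; the companion equation $f^\alpha_{i|j}=f^\alpha_{j|i}$ follows the same way from $[Z_i,Z_j]f^\alpha = 0$ (CR integrability). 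Identity (2) is obtained via an identical template, this time using the commutator $[T, Z_\ibar] f^\alpha = A_\ibar{}^k f^\alpha_k$.

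For the third-order identities (3)--(5), the main tool is the Ricci identity applied to a tensor-valued section $S$ on $M$ with values in the pull-back bundle; concretely $S = f^\alpha_i$ (a section of $(T^{1,0}M)^{\ast}\otimes f^{\ast}T^{1,0}N$) for (3) and (5), and $S = f^\alpha_\jbar$ (a section of $(T^{0,1}M)^{\ast}\otimes f^{\ast}T^{1,0}N$) for (4):
\[
S_{|a|b} - S_{|b|a} = R^M(Z_a, Z_b)\cdot S + R^N(f_{\ast} Z_a, f_{\ast} Z_b)\cdot S - T^c{}_{ab}\, S_{|c}.
\]
Each contribution is then identified as follows. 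The K\"ahler curvature term is computed by writing $f_{\ast} Z_a = f^\gamma_a e_\gamma + f^{\gammabar}_a e_{\gammabar}$ and using that $R^N$ acting on $T^{1,0}N$ is nonzero only on mixed holomorphic/anti-holomorphic pairs; this produces precisely the $R^\alpha_{\beta\deltabar\gamma}$-bilinear-in-$f$ terms with the stated index combinations. The $M$-curvature $R^M(Z_a,Z_b)$ vanishes in (3) and (4) because the pure antiholomorphic-type Tanaka--Webster curvature $R_i{}^l{}_{\jbar\kbar}$ is zero, whereas in (5) it supplies the genuine Webster-curvature term $R_i{}^l{}_{j\kbar} f^\alpha_l$. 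The torsion term $T^c{}_{ab} S_{|c}$ accounts for the pseudo-Hermitian $A$-terms in (3) and (4)---arising from Reeb components encountered when covariantly differentiating across the $(Z_\jbar, Z_\kbar)$ bracket---as well as for the $\sqrt{-1}\, h_{j\kbar}\, f^\alpha_{i|0}$ term in (5), which originates from the genuine Reeb component of $[Z_j, Z_\kbar]$.

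The main obstacle is purely organizational: one has to carefully track signs, index placements, and the precise components of the Tanaka--Webster connection in order to match each contribution to the exact combination appearing on the right-hand sides. I would organize the entire computation in a fixed admissible frame, using Webster's structure equations and the Bianchi symmetries of the TW curvature tensor---the latter being exactly what forces the pure-type $M$-curvature to drop out in (3) and (4) and leaves only the torsion and $N$-curvature contributions.
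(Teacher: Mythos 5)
Your proposal is correct in substance and follows essentially the same route as the paper. Identities \eqref{e:c1}--\eqref{e:c2} are handled identically (the symmetry of $\Gamma^\alpha_{\beta\gamma}$ kills the target contribution and the scalar Tanaka--Webster commutation relations do the rest), and for the three third-order identities your Ricci identity for the product connection is an invariant repackaging of what the paper does concretely: expand $f^\alpha_{i|\bar j|\bar k}$ at a point where the Christoffel symbols of $M$ at $p$ and of $N$ at $f(p)$ vanish, use the K\"ahler symmetry $\partial_\delta\Gamma^\alpha_{\beta\gamma}=\partial_\gamma\Gamma^\alpha_{\beta\delta}$ to cancel the derivative-of-Christoffel terms under antisymmetrization, and quote the scalar commutation relations of \cite{L2} and \cite{GL} for the remaining $f^\alpha_{i\bar j\bar k}-f^\alpha_{i\bar k\bar j}$. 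One piece of your accounting should be corrected before you carry out the computation: for \eqref{e:com1} and for the fourth identity, the Tanaka--Webster torsion tensor evaluated on two anti-holomorphic (or two holomorphic) vectors vanishes, so the $A$-terms do \emph{not} arise from the $T^c{}_{ab}S_{|c}$ term of your Ricci identity; they arise from the $M$-curvature operator acting on the $(T^{1,0}M)^{*}$ factor, since the Webster curvature two-form $\Pi_i{}^l$ contains the terms $\sqrt{-1}\,\theta_i\wedge\tau^l-\sqrt{-1}\,\tau_i\wedge\theta^l$ and therefore has nonzero $\theta^{\bar j}\wedge\theta^{\bar k}$ components proportional to the torsion $A$. (Your attribution is correct for \eqref{e:com2}, where $T(Z_j,Z_{\bar k})$ is proportional to $h_{j\bar k}T$ and produces the $\sqrt{-1}\,h_{j\bar k}\,f^\alpha_{i|0}$ term.) With that one correction every term matches and the two arguments coincide.
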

\begin{proof} The proof use the usual commutation relations for functions on CR manifolds as derived \cite{L2} and \cite{LiW}. For the first relation in \eqref{e:c1}, since $\Gamma^{\alpha}_{\beta\gamma} = \Gamma^{\alpha}_{\gamma\beta}$, we deduce from \eqref{e:covariantderivative} 
that
\begin{equation}
f^{\alpha}_{i | \jbar} - f^{\alpha}_{\jbar | i} = f^{\alpha}_{i \jbar} - f^{\alpha}_{\jbar i} = \sqrt{-1} f^{\alpha}_{0} h_{i \jbar},
\end{equation}
as desired. The proof of~\eqref{e:c2} is similar. To prove \eqref{e:com1}, we compute,
at a point $p$ under the assumption that the Christoffel symbols of $M$ at $p$ and those of $N$
at $f(p)$ vanish,
\begin{align}
f^{\alpha}_{i|\jbar | \bar{k}} 
& = (f^{\alpha}_{i|\jbar})_{,\bar{k}} \quad \text{(covariant derivative)}\notag \\
& = f^{\alpha}_{i\jbar\bar{k}} + (\Gamma^{\alpha}_{\beta\gamma} f^{\beta}_{i} f^{\gamma}_{\jbar} )_{,\bar{k}} \notag \\
& = f^{\alpha}_{i\jbar\bar{k}} 
+ \partial_{\delta}\Gamma^{\alpha}_{\beta\gamma} f^{\beta}_{i} f^{\gamma}_{\jbar} f^{\delta}_{\bar{k}} 
+ R^{\alpha}_{\beta\deltabar\gamma} f^{\bar{\delta}}_{\bar{k}} f^{\beta}_{i} f^{\gamma}_{\jbar}
\end{align}
Since the Christoffel symbols vanish at $f(p)$, one has $ \partial_{\delta}\Gamma^{\alpha}_{\beta\gamma} = \partial_{\gamma}\Gamma^{\alpha}_{\beta\delta}$ at $f(p)$, and thus
\begin{equation}
f^{\alpha}_{i|\jbar | \bar{k}} - f^{\alpha}_{i| \bar{k} | \jbar} 
= f^{\alpha}_{i\jbar\bar{k}} - f^{\alpha}_{i\bar{k}\jbar} 
+ R^{\alpha}_{\beta\deltabar\gamma} f^{\beta}_{i}
 (f^{\gamma}_{\jbar} f^{\deltabar}_{\bar{k}} 
- f^{\gamma}_{\bar{k}} f^{\deltabar}_{\bar{j}}),
\end{equation}
which, by the commutation relations for Tanaka-Webster covariant derivatives (see \cite{GL}), implies~\eqref{e:com1}. 

Similarly, with $\Gamma^\alpha_{\beta\gamma}(f(p))=0$, one has
\begin{eqnarray*}
f^\alpha_{\jbar| i|k}-f^\alpha_{\jbar|k|i}
&=&f^\alpha_{\jbar i k}-f^\alpha_{\jbar k i}+(\Gamma^\alpha_{\beta\gamma} f^\beta_{\jbar} f^{\gamma_i})_{,k}-(\Gamma^\alpha_{\beta\gamma} f^\beta_{\jbar} f^{\gamma}_k)_{,i}\\
&=&-\sqrt{-1}(h_{i\jbar} A^{\lbar}_k-h_{k\jbar} A^{\lbar}_i )f^\alpha_{\lbar} + R^{\alpha}_{\beta\deltabar\gamma} f^\beta_{\jbar}
(f^\gamma_{i} f^{\deltabar}_{k} -f^\gamma_{k} f^{\deltabar}_{i})\\
&=&\sqrt{-1}(h_{k\jbar}A_i^{\lbar}-h_{i\jbar} A_k^{\lbar})f^\alpha_{\lbar} + R^{\alpha}_{\beta\deltabar\gamma} f^\beta_{\jbar}
(f^\gamma_{i} f^{\deltabar}_{k} -f^\gamma_{k} f^{\deltabar}_{i}).
\end{eqnarray*}
This proves (3.8). The proof of \eqref{e:com2} is similar and is omitted.
\end{proof}

\begin{proof}[Proof of Theorem~\ref{thm:crsiu}]
We compute covariant derivative, using \eqref{e:com1}, (3.8) and \eqref{e:com2},
\begin{align}
 D_{k}(B_{i\jbar} f^{\alpha}) 
 & = f^{\alpha}_{i | \jbar | k} 
 -\frac{1}{m} f^{\alpha}_{l | \bar{p} | k} h^{l\bar{p}} \notag\\
 & = f^{\alpha}_{\jbar | i | k} + \sqrt{-1} f^{\alpha}_{0 | k} h_{i\jbar} 
-\frac{1}{m} f^{\alpha}_{l | \bar{p} | k} h^{l\bar{p}} h_{i\jbar} \notag \\
 & = f^{\alpha}_{\jbar | k | i} + \sqrt{-1}(h_{k\jbar} A_{i}\!^{\bar{l}} - h_{i\jbar} A_{k}\!^{\bar{l}}) f^{\alpha}_{\bar{l}} \notag \\
 & \quad + R^{\alpha}_{\rho\deltabar\gamma} f^{\rho}_{\jbar} (f^{\gamma}_{i} f^{\deltabar}_{k} - f^{\gamma}_{k} f^{\deltabar}_{i}) + \sqrt{-1} f^{\alpha}_{0 | k} h_{i\jbar}
 -\frac{1}{m} f^{\alpha}_{l | \bar{p} | k} h^{l\bar{p}}h_{i\jbar} \notag \\
& = f^{\alpha}_{\jbar | k | i} + \sqrt{-1}(h_{k\jbar} A_{i}\!^{\bar{l}} - h_{i\jbar} A_{k}\!^{\bar{l}}) f^{\alpha}_{\bar{l}} \notag \\
 & \quad+ R^{\alpha}_{\rho\deltabar\gamma} f^{\rho}_{\jbar} (f^{\gamma}_{i} f^{\deltabar}_{k} - f^{\gamma}_{k} f^{\deltabar}_{i})
-\frac{1}{m} f^{\alpha}_{\bar{p} | l | k} h^{l\bar{p}}h_{i\jbar}.
\end{align}
Taking the trace over $k$ and $j$ (using the Levi matrix $h^{k\jbar}$),
\begin{equation} \label{e:bp1}
 h^{k\bar{j}} D_{k}(B_{i\jbar} f^{\alpha}) 
= \frac{m-1}{m} P_i f^{\alpha} 
+ R^{\alpha}_{\rho\deltabar\gamma} f^{\rho}_{\jbar} (f^{\gamma}_{i} f^{\deltabar}_{k} - f^{\gamma}_{k} f^{\deltabar}_{i}) h^{k\jbar}
\end{equation}
From \eqref{e:bp1}, we see that if the target manifold $N$ is flat and $m\geq 2$,
then any CR-pluriharmonic map satisfies $P_i f^{\alpha} = 0$. This is Graham and Lee's result in \cite{GL}.

As in \cite{GL}, we consider the tensor $E$ on $M$ defined by
\begin{equation}
 E_{\jbar} = g_{\alpha\betabar} f^{\betabar}_{\bar{l}} (B_{i\jbar} f^{\alpha}) h^{i\bar{l}}.
\end{equation}
Then, the divergent $\delta E$ is
\begin{multline}
 E_{\jbar}{}^{,\jbar}
 = g_{\alpha\betabar} f^{\betabar}_{\bar{l} | k} (B_{i\jbar} f^{\alpha}) h^{i\bar{l}} h^{k\jbar}
 + g_{\alpha\betabar} f^{\betabar}_{\bar{l}} (B_{i\jbar} f^{\alpha})_{| k} h^{i\bar{l}} h^{k\jbar}\\
 = |B_{i \jbar} f^{\beta}|^2 + \frac{m-1}{m} \langle Pf, \bar{\partial}_b \bar{f}\rangle
 + R_{\rho\deltabar\gamma\betabar} f^{\betabar}_{\bar{l}} f^{\rho}_{\jbar}
(f^{\gamma}_{i} f^{\deltabar}_{k} - f^{\gamma}_{k} f^{\deltabar}_{i}) h^{i\bar{l}} h^{k\jbar}.
\end{multline}
Here, we used the fact that $B_{i\jbar} f^{\alpha}$ is trace-free.
Taking integral both sides over closed manifold $M$, we obtain
\begin{multline}
- \frac{m-1}{m}\int_M \langle Pf, \bar{\partial}_b \bar{f}\rangle
 = \int_M |B_{i\jbar} f^{\beta}|^2 
 + \int_M R_{\rho\deltabar\gamma\betabar} f^{\betabar}_{\bar{l}} f^{\rho}_{\jbar}
(f^{\gamma}_{i} f^{\deltabar}_{k} - f^{\gamma}_{k} f^{\deltabar}_{i}) h^{i\bar{l}} h^{k\jbar}.
\end{multline}
This proves \eqref{e:po}. To prove \eqref{e:po2}, we consider
\begin{equation}
 F_{\bar{l}} = g_{\alpha\betabar} f^{\alpha}_{\jbar | k} f^{\betabar}_{\bar{l}} h^{k\jbar}.
\end{equation}
The divergent $\delta F$ is 
\begin{align}
 F_{\lbar}\!^{,\lbar} 
 & = g_{\alpha\betabar} f^{\alpha}_{\jbar | k | i} f^{\betabar}_{\bar{l}} h^{k\jbar} h^{i\bar{l}} 
 + g_{\alpha\betabar} f^{\alpha}_{\jbar | k} f^{\betabar}_{\bar{l} | i} h^{k\jbar} h^{i\bar{l}}\\
 & = g_{\alpha\betabar} \left(P_i f^{\alpha} - \sqrt{-1}m A_{i}\!^{\bar{k}} f^{\alpha}_{\bar{k}} \right) f^{\betabar}_{\bar{l}} h^{k\jbar} h^{l\ibar} 
 + g_{\alpha\betabar} \tau^{\alpha}[f] \overline{ \tau ^{\betabar}[f]}.
\end{align}
Taking integration over $M$, we obtain the desired equality.
\end{proof}

\section{Rigidity theorem/Proof of Corollary 1.3}
In this section, we prove Corollary~\ref{cor:rigidity}. For the part (a), suppose $N$ has
strongly semi-negative curvature, then 
\begin{eqnarray}
\lefteqn{R_{\rho\deltabar\gamma\betabar} f^{\betabar}_{\bar{l}} f^{\rho}_{\jbar}
(f^{\gamma}_{i} f^{\deltabar}_{k} - f^{\gamma}_{k} f^{\deltabar}_{i}) h^{i\bar{l}} h^{k\jbar}} \notag\\
&=& {1\over 2} R_{\gamma\deltabar\rho\betabar}
(f^{\gamma}_{i} f^{\deltabar}_{k} - f^{\gamma}_{k} f^{\deltabar}_{i})
(\overline{f^{\beta}_{l}f^{\bar{\rho}}_{j} - f^{\beta}_{j}f^{\bar{\rho}}_{l}} ) 
h^{k\jbar}h^{i\bar{l}} \geq 0.
\end{eqnarray}
This and \eqref{e:po} imply that 
\begin{equation}
 \int_M \langle Pf , \bar{\partial}_b \bar{f} \rangle \leq 0.
\end{equation}
The equality holds if and only if both terms in the right of~\eqref{e:po} vanish; in particular,
\begin{equation}
 B_{i\jbar} f^{\alpha} = 0,
\end{equation}
and therefore, $f$ is CR-pluriharmonic.
\medskip

Part (b) follows from the following more general theorem.
\begin{theorem}\label{th:siusampson}
Let $(M^{2m+1}, \theta)$ be a closed pseudo-Hermitian CR manifold with $CR$ dimension $m$, $(N, g)$ a K\"{a}hler manifold,
and $f\colon M \to N$ a pseudo-Hermitian harmonic map. Suppose also that $M$ has vanishing Webster torsion, $N$ has strongly negative curvature, and $df|_H$ 
has (real) rank at least 4 on a dense set of $M$, then $f$ is a CR or anti CR map. Furthermore, if $f$ is an immersion, then $f$ must be CR. More generally, if the curvature of $N$
is negative of order $k$ and $df|_H$ has rank at least $2k$ on a dense set, then the same conclusion holds.
\end{theorem}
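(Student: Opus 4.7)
The plan is to deduce Theorem~\ref{th:siusampson} by running Siu's original rigidity argument from \cite{Siu} on the two integral identities of Theorem~\ref{thm:crsiu}. The vanishing Webster torsion and pseudo-Hermitian harmonicity hypotheses are precisely what is needed to eliminate all CR-specific correction terms and reduce the situation to Siu's K\"ahler setup.

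First, with $A_{ij}\equiv 0$ and $\tau[f]\equiv 0$, the entire right-hand side of \eqref{e:po2} vanishes, giving $\int_M \langle Pf, \bar\partial_b \bar f\rangle = 0$. Substituting into \eqref{e:po} (valid since $m\ge 2$ because $\dim M \ge 5$) yields
\[
0 = \int_M |B_{i\jbar}f^\alpha|^2 + \int_M R_{\rho\deltabar\gamma\betabar} f^{\betabar}_{\bar l} f^\rho_{\jbar}\bigl(f^\gamma_i f^{\deltabar}_k - f^\gamma_k f^{\deltabar}_i\bigr) h^{i\bar l} h^{k\jbar}.
\]
Antisymmetrizing the curvature term in $(i,k)$ and $(j,l)$ exactly as in the proof of Corollary~\ref{cor:rigidity}(a) rewrites its integrand as a manifestly nonnegative quadratic form in the skew tensor $f^\gamma_i f^{\deltabar}_k - f^\gamma_k f^{\deltabar}_i$ under strong seminegativity of $R^N$. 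Consequently both terms vanish pointwise, so $f$ is CR-pluriharmonic and the curvature quadratic form vanishes identically on $M$.

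The rest is a pointwise algebraic argument following Siu verbatim. Strong negativity of $R^N$ combined with pointwise vanishing of the curvature quadratic form forces $f^\gamma_i(p) f^{\deltabar}_k(p) - f^\gamma_k(p) f^{\deltabar}_i(p) = 0$ for all indices and all $p$. Siu's linear-algebra lemma, applied on the horizontal distribution $H = \ker\theta$, then yields that wherever $df|_H$ has real rank at least $4$, either $\partial_b f$ or $\bar\partial_b f$ vanishes at that point. Density of this rank locus together with a connectedness/unique-continuation argument for the pseudo-Hermitian harmonic equation promotes this pointwise dichotomy to a global one on each connected component of $M$, giving $f$ CR or anti-CR. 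When $f$ is an immersion, the rank hypothesis rules out the anti-CR case. The generalization to curvature negative of order $k$ and $\mathrm{rank}\,df|_H \ge 2k$ is identical, with Siu's rank-$4$ lemma replaced by its higher-order analogue.

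The main obstacle is the globalization step: propagating the pointwise dichotomy $\partial_b f(p) = 0$ or $\bar\partial_b f(p)=0$ from the dense rank-$\ge 4$ locus to every connected component of $M$. This is where the vanishing-torsion assumption is indispensable, since it sufficiently decouples the second-order subelliptic systems satisfied by $\partial_b f$ and $\bar\partial_b f$ (via the CR-pluriharmonicity $B_{i\jbar}f = 0$ and pseudo-Hermitian harmonicity $\tau[f]=0$) to allow a unique-continuation argument analogous to the one in the Siu-Sampson K\"ahler setting; without it, torsion cross-terms would obstruct the dichotomy from propagating.
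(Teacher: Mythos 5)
Your reduction to the pointwise dichotomy is exactly the paper's: with $A_{ij}\equiv 0$ and $\tau[f]=0$, identity \eqref{e:po2} gives $\int_M\langle Pf,\bar{\partial}_b\bar f\rangle=0$; then \eqref{e:po} and strong seminegativity force $B_{i\jbar}f^\alpha=0$ and the pointwise vanishing of the curvature quadratic form; and Siu's Lemma~1 together with negativity of order $k$ yields that at each point of the dense rank locus (hence, by continuity, at every point of $M$) either $\partial_b f$ or $\bar{\partial}_b f$ vanishes. Up to there the proposal matches the paper.

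The gap is the globalization step, which you correctly flag as the main obstacle but then only gesture at via ``a unique-continuation argument analogous to the Siu--Sampson K\"ahler setting.'' There is no direct CR analogue of the K\"ahler mechanism (where $\bar\partial f$ is holomorphic on the pluriharmonic locus), and your proposed mechanism --- that vanishing torsion ``decouples the second-order subelliptic systems'' --- is not what makes the argument work. The paper proves a separate lemma (a CR analogue of \cite[Theorem~3.1]{LN}): if $M$ admits a transversal infinitesimal CR automorphism and a function $g$ satisfies the pointwise dichotomy $\partial_b g(p)=0$ or $\bar{\partial}_b g(p)=0$, then $g$ is CR or anti-CR. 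Its proof takes $A_0,B_0$ to be the closures of the interiors of the two zero sets, uses the commutator $[Z_{\jbar},Z_i]=\sqrt{-1}\,h_{i\jbar}T+\cdots$ to get $Tg=0$ on $A_0\cap B_0$, defines $G$ to be $Tg$ on $A_0$ and $0$ on $B_0$, shows $G$ is anti-CR (this is where $[Z_j,T]=-\Gamma^k_{0j}Z_k$, i.e.\ the torsion-free Reeb field, enters --- not any decoupling of the harmonic map system), kills $G$ by unique continuation for CR functions, and then uses $g_{i,\jbar}=g_{\jbar,i}$ to force $g$ constant when both $A_0$ and $B_0$ are nonempty. Without this lemma or a substitute, the passage from the pointwise dichotomy to ``$f$ is CR or anti-CR'' is unproved. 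A secondary slip: the anti-CR case for an immersion is excluded not by the rank hypothesis on $df|_H$ but by the computation $f^{\alpha}_0=\frac{\sqrt{-1}}{m}(f^{\alpha}_{\ibar|j}-f^{\alpha}_{j|\ibar})h^{j\ibar}=0$ once $\partial_b f=0$, which gives $df(T)=0$ and contradicts injectivity of $df$.
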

Here, as defined in \cite{Siu}, the curvature tensor $R_{\alpha\betabar \gamma \deltabar}$ of $N$ is said to be \emph{negative of order $k$} if it is strongly semi-negative and satisfies the following: If $A = (A^{\alpha}_{\ibar})$ and $B=(B^{\alpha}_{i})$ are any two $m\times k$ matrices with
\[
\rk \begin{pmatrix} A & B \\ \bar{B} & \bar{A} \end{pmatrix} = 2k,
\]
and if
\begin{equation*}
 \sum_{\alpha\beta\gamma\delta} R_{\alpha\betabar\gamma\deltabar} 
 \xi^{\alpha\betabar}_{\ibar\jbar}\overline{\xi^{\gamma\deltabar}_{\ibar\jbar}} =0, \quad \text{where} \quad \xi^{\alpha\betabar}_{\ibar\jbar} 
 = A^{\alpha}_{\ibar}\overline{B^{\beta}_{j}} - A^{\alpha}_{\jbar} \overline{B^{\beta}_i}
\end{equation*}
for all $1\leq i, j\leq k$, then either $A=0$ or $B=0$. As proved in \cite[Lemma~2]{Siu}, if the curvature of $N$ is strongly negative, then it is negative of order~2.

\begin{proof}[Proof of Theorem~\ref{th:siusampson}] The proof follows the lines in \cite{Siu} and is included here for completeness.
Let $U$ be an open connected open subset of $M$ such that the rank of $df|_H$ over $\mathbb{R}$
is at least $2k$ on $U$. Fix $p\in M$, we shall prove that either $\dbb f(p) = 0$, or $\partial_b f(p) = 0$; since $U$ is dense in $M$, it suffices to prove in the case $p\in U$.
Let $K$ be the kernel of $df|_H \colon H_p \to T_qM$. Then $\dim_{\mathbb{R}} K \leq 2n - 2k$. By Lemma~1 in \cite{Siu}, 
 there exists a basis $g_1, g_2, \dots , g_n$ of $H_p$ over $\mathbb{C}$ such that for a tube of $k$ indices
 $1\leq i_1< \dots < i_k \leq n$ the intersection of $K$ with the $\mathbb{C}$-vector subspace of $H_p$ spanned by 
 $g_{i_1}, \dots , g_{i_k}$ is zero. Choose a holomorphic frame $\{Z_j\colon 1\leq j\leq n\}$ in a neighborhood of $p$
 such that $g_j = Z_j + \bar{Z}_j$ and let $W$ be the complex vector subspace of $T^{1,0}_p \oplus T^{0,1}_p$, spanned by $\{Z_j|_p, \bar{Z}_j|_p \mid j =1, 2, \dots k\}$. 
 Then $df|_W$ has rank at least $2k$. Let $A = (f_{\ibar}^{\alpha})$ and $B=(f^{\alpha}_{i})$ at~$p$. Then
 \begin{equation}
 \rk \begin{pmatrix}
 A & B\\
 \bar{B} &\bar{A}
 \end{pmatrix}
 =2k
 \end{equation}
Since $M$ has vanishing torsion, i.e, $A^{ij} = 0$, and $\tau[f]=0$, from~\eqref{e:po2}, we deduce that $f$ is $\bar{\partial}_b$-pluriharmonic, and
\begin{equation}\label{e:r1}
 \sum_{\alpha\beta\gamma\delta} R_{\alpha\betabar\gamma\deltabar} 
 \xi^{\alpha\betabar}_{\ibar\jbar}\overline{\xi^{\gamma\deltabar}_{\ibar\jbar}} =0 \quad \text{at}\quad q',
\end{equation}
where
 \begin{equation}
 \xi^{\alpha\betabar}_{\ibar\jbar} = f^{\alpha}_{\ibar}(p) f^{\betabar}_{\jbar}(p) - f^{\alpha}_{\jbar}(p) f^{\betabar}_{\ibar}(p) 
 = A^{\alpha}_{\ibar}\overline{B^{\beta}_{j}} - A^{\alpha}_{\jbar} \overline{B^{\beta}_i}.
 \end{equation} 
Since $R_{\alpha\betabar\gamma\deltabar}$ is negative of order $k$, we deduce that for $1\leq i_1< \dots < i_k \leq n$,
either $f_{\jbar}^{\alpha} = 0$ for all $\alpha$ and $j=i_1, i_2,\dots, i_k$ or $f^{\alpha}_j = 0$ for all $\alpha$
and $j=i_1, i_2,\dots, i_k$. Because $k\geq 2$, it must follows that either $f_{\ibar}^{\alpha} = 0$ for all $\alpha$
and all $j$, or $f_{j}^{\alpha} = 0$ for all $\alpha$ and all~$j$.

To finish the proof, we need the following lemma which generalizes a previous result for sphere in $\mathbb{C}^{m+1}$ in \cite[Theorem~3.1]{LN}.
\begin{lemma}
Let $M$ be a strictly pseudoconvex CR manifold of dimension at least $5$. Suppose that $M$ admits a (local) transversal infinitesimal CR automorphism $X$ at every point. Assume that $g$ is a twice differentiable function on $M$ such that for any $p\in M$, either $\partial_b g(p)=0$, or $\bar{\partial}_b g(p) = 0$. Then $g$ is either CR, or anti CR.
\end{lemma}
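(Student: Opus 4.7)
I would argue by contradiction, partitioning $M$ into the open sets on which $g$ is CR and on which it is anti CR, and using the flow of the transversal CR automorphism to rule out their coexistence. Define
\[
A_{0}:=\{p\in M:\partial_{b}g(p)\ne 0\},\qquad B_{0}:=\{p\in M:\bar{\partial}_{b}g(p)\ne 0\}.
\]
By the pointwise hypothesis these are disjoint open sets; $g$ is CR on $A_{0}$ and anti CR on $B_{0}$. It suffices to show that, on each connected component of $M$, one of $A_{0}$, $B_{0}$ is empty. Assuming the contrary on some component, continuity of the first derivatives of $g$ produces a point $p\in\overline{A_{0}}\cap\overline{B_{0}}$ at which both $\partial_{b}g(p)$ and $\bar{\partial}_{b}g(p)$ vanish and every neighborhood meets both $A_{0}$ and $B_{0}$.

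Let $X$ be a local transversal infinitesimal CR automorphism near $p$, with flow $\phi_{t}$. Since $\phi_{t}$ is a local CR diffeomorphism it preserves the type decomposition, so
\[
\partial_{b}(g\circ\phi_{t})=\phi_{t}^{*}(\partial_{b}g),\qquad \bar{\partial}_{b}(g\circ\phi_{t})=\phi_{t}^{*}(\bar{\partial}_{b}g),
\]
and $\phi_{t}$ maps CR (resp.\ anti CR) open sets to CR (resp.\ anti CR) open sets. The dichotomy of the hypothesis is thus preserved by $\phi_{t}$. My plan is to exploit the transversality of $X$ together with this invariance: the orbit $\{\phi_{t}(p)\}$ is transverse to $H$, and for each $s$ sufficiently small the nearby orbits through points of $A_{0}$ and $B_{0}$ approaching $p$ sweep out an open neighborhood of the $X$-orbit of $p$. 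Along such nearby orbits the hypothesis reduces to a one-dimensional $C^{1}$ dichotomy for the curves $\alpha(t):=\phi_{t}^{*}(\partial_{b}g)|_{q}$ and $\beta(t):=\phi_{t}^{*}(\bar{\partial}_{b}g)|_{q}$, which together with $\alpha(0)=\beta(0)=0$ at $q=p$ yields vanishing of the $X$-jet of $\partial_{b}g$ and $\bar{\partial}_{b}g$ to every order at $p$.

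Upgrading regularity on the CR and anti CR open sets --- which follows from local embeddability in $\mathbb{C}^{m+1}$ and Lewy extension, valid since $\dim M\ge 5$ --- one concludes that $g$ is real analytic on $A_{0}$ and on $B_{0}$ with real-analytic CR (resp.\ anti CR) extensions, and that $X$ acts real analytically on the $f^{*}T^{1,0}N$- and $f^{*}T^{0,1}N$-components. Using the transversal CR automorphism to patch these extensions along the $X$-orbit through $p$, and applying real-analytic unique continuation, one derives that the horizontal gradient $dg|_{H}$ vanishes in a full open neighborhood of $p$. But this neighborhood then meets neither $A_{0}$ nor $B_{0}$, contradicting the choice of $p$.

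The main obstacle is exactly this patching step: converting the pointwise dichotomy at boundary points of $A_{0}$ and $B_{0}$ into the vanishing of $dg|_{H}$ on an open neighborhood. The subtlety is that the $X$-orbit of $p$ may itself fail to enter either $A_{0}$ or $B_{0}$, so the one-dimensional dichotomy argument has to be carried out on nearby orbits and then pushed back to $p$ using the transversality of $X$ and the real-analytic structure coming from the CR symmetry. This is where the transversality of $X$, the regularity afforded by the CR automorphism action, and the dimensional hypothesis $\dim M\ge 5$ all enter.
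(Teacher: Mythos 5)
There is a genuine gap. Your strategy (flow the transversal CR automorphism, invoke unique continuation) gestures at the right ingredients, but the central mechanism is missing and the bridge you propose to span it is false. First, nothing in your argument actually produces the key fact that $Xg=0$; knowing that at each point one of $\partial_b g$, $\bar{\partial}_b g$ vanishes says nothing a priori about the derivative of $g$ in the transversal direction, and the ``one-dimensional dichotomy along nearby orbits'' does not extract it. The paper gets $Xg=0$ from strict pseudoconvexity: the commutator identity $[Z_{\jbar},Z_i]=\sqrt{-1}\,h_{i\jbar}T+\cdots$ (with $T=X$ the Reeb field of an adapted contact form) applied to $g$ expresses $h_{i\jbar}Tg$ in terms of the mixed second derivatives $Z_iZ_{\jbar}g$ and $Z_{\jbar}Z_ig$, which vanish on the closures of the interiors of $\{\bar{\partial}_bg=0\}$ and $\{\partial_bg=0\}$ respectively; nondegeneracy of the Levi form then yields $Tg=0$ on the overlap, and a continuity/unique-continuation argument (applied to the auxiliary function $G$ that equals $Tg$ on one set and $0$ on the other, which one checks is anti CR) propagates $Tg=0$ to all of $M$. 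This use of the Levi form is the engine of the proof and is entirely absent from your sketch.

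Second, the regularity upgrade you rely on is not available: CR functions on a smooth strictly pseudoconvex CR manifold of dimension $\ge 5$ are \emph{not} real analytic in general. Local embeddability plus Lewy extension gives a one-sided holomorphic extension, which yields unique continuation for CR functions (a CR function vanishing on an open set vanishes on the component), but not real analyticity of $g$ on your sets $A_0$, $B_0$, so ``real-analytic unique continuation'' and the ``real-analytic patching along the $X$-orbit'' cannot be invoked. Finally, you yourself flag the patching step --- converting the pointwise dichotomy into vanishing of $dg|_H$ on a neighborhood --- as ``the main obstacle'' and do not resolve it; in the paper this is exactly where the identity $g_{i,\jbar}=g_{\jbar,i}$ (a consequence of $Tg=0$ and the commutator) is used: the left side vanishes on one closed set, the right side on the other, their union is $M$, so both vanish everywhere and $g$ is constant in the mixed case. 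As written, your proposal is a plan rather than a proof, and its one concrete technical claim (real analyticity) is incorrect.
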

\begin{proof}
We shall follow the idea in \cite[Theorem~3.1]{LN}. Let $A_0$ and $B_0$ be the closures of the interiors of the sets $\{\partial_b g=0\}$ and $\{\bar{\partial}_b g=0\}$, respectively. It follows that $A_0\cup B_0 =M$. If either $A_0$ or $B_0$ is empty, then the conclusion of the lemma is clear. Thus suppose that both sets are nonempty. We shall show that $g$ is a constant. By connectedness, it suffices to show that $g$ is locally constant. Therefore, we can assume that there is a frame $\{Z_{i}, Z_{\ibar},T\}$ on~$M$ such that $T=X$ is the given infinitesimal CR automorphism, $\{Z_i\}$ is a holomorphic frame, and $Z_{\ibar} =\overline{Z_{i}}$. We have the following identities
\begin{align}
	[Z_{\jbar} , Z_i] & = \sqrt{-1} h_{i\jbar} T + \Gamma^l_{\jbar i} Z_l - \Gamma^{\bar{l}}_{i\jbar} Z_{\lbar}, \label{e:c11}\\
	[Z_j, Z_i] &=\Gamma_{ji}^k Z_k - \Gamma_{ij}^k Z_k, \label{e:c2} \\
	[Z_j, T] &= -\Gamma_{0j}^k Z_k;\label{e:c3} 
\end{align}
where the $h_{i\jbar}$ and the Christoffel symbols are the Levi-form and the symbols corresponding to the pseudohermitian structure $\theta$ for which $T$ is the associated Reeb vector field \cite[p. 418]{L}. Notice that the vanishing of the components of the Webster torsion follows from the assumption that $T$ is an infinitesimal CR automorphism.

Let $K = A_0\cap B_0$ and $p\in K$. Then for each $j$, $Z_j g$ and $Z_{\jbar} g$ both vanish at $p$. Moreover, by continuity, $Z_kZ_{\jbar} g =0$ on $B_0$ and $Z_{\jbar} Z_k g =0$ on $A_0$. Therefore, from~\eqref{e:c11} we find that $Tg =0$ on $K$.

Let
\begin{equation}
G(p) = \begin{cases}
 0, \quad &\text{if}\ p\in B_0,\\
 Tg(p), \quad & \text{if}\ p\in A_0.
\end{cases}
\end{equation}
Then $G$ is continuous on $M$. We claim that $G$ is anti CR on $M$. In fact, on the interior of $A_0$, one computes, using~\eqref{e:c3},
\[
Z_j(Tg) = T(Z_jg) - \Gamma_{0j}^k Z_k g = 0.
\]
Then by continuity, $Z_j(G)$ vanishes on $A_0$ and so on the whole $M$; the claim immediately follows.

By well-known unique continuation for anti CR functions (see, e.g., \cite{LSW} for a detail proof, notice that $M$ is locally embeddable into $\mathbb{C}^{m+1}$), it follows that $G\equiv 0$ on~$M$. Whence, $Tg$ vanishes on $A_0$. Using similar argument for $\bar{g}$, we find that $Tg = \overline{T\bar{g}}$ also vanishes on $B_0$. Therefore,
\begin{equation}
\label{e:tg}
Tg = 0 \quad \text{on} \ M.
\end{equation}
From~\eqref{e:c11} and \eqref{e:tg}, we find that, for all $i, j$,
\begin{equation}\label{e:last}
	g_{i,\bar{j}} = g_{\bar{j}, i}.
\end{equation}
Since the left hand side vanishes on $A_0$, while the right hand side vanishes on $B_0$, both must vanish on $M$. Hence $g$ must be a constant.
\end{proof}
Applying the lemma for each component $f^{\alpha}$ (noticing that the vanishing of Webster torsion implies that the Reeb vector field $T$ is a transversal CR automorphism), we conclude that $f$ is either CR or anti-CR, as desired.

Finally, if $f$ is anti CR, then $f^{\alpha}_{j} = 0$. Therefore,
\begin{equation}
f^{\alpha}_ 0 = \frac{\sqrt{-1}}{m} (f^{\alpha}_{\ibar | j} - f^{\alpha}_{j | \ibar}) h^{j\ibar} = 0.
\end{equation}
This equation cannot happen when $f$ is an immersion. 
\end{proof}

\end{document}